\documentclass[12pt,a4paper,reqno]{amsart}
\usepackage{amsmath}
\usepackage{amsfonts}
\usepackage{amssymb,amsthm}
\numberwithin{equation}{section}

     \addtolength{\textwidth}{3 truecm}
     \addtolength{\textheight}{1 truecm}
     \setlength{\voffset}{-.6 truecm}
     \setlength{\hoffset}{-1.3 truecm}

\def\Z{{\mathbb Z}}
\def\E{{\mathbb E}}

\def\R{{\mathbb R}}
\def\N{{\mathbb N}}
\def\Q{{\mathbb Q}}

\def\pmod #1{\ ({\rm{mod}}\ #1)}
\def\bx {{\mathbf x}}
\def\bB {{\mathbf B}}
\def\cP{{\mathcal P}}

\def\cA{{\mathcal A}}
\def\cR{{\mathcal R}}
\def\cS{{\mathcal S}}

\def\cJ{{\mathcal Z}}
\def\cI{{\mathcal I}}
\def\cJ{{\mathcal J}}

\theoremstyle{plain}
\newtheorem{theorem}{Theorem}
\newtheorem{lemma}{Lemma}

\newtheorem{proposition}{Proposition}

\theoremstyle{definition}
\newtheorem*{acknowledgment}{Acknowledgments}
\theoremstyle{remark}

     \begin{document}

\title
{The Green-Tao theorem for primes of the form $x^2+y^2+1$}

\author{Yu-Chen Sun}
\address {Medical School, Nanjing
University, Nanjing 210093, People's Republic of China}
\email{b111230069@smail.nju.edu.cn}
\author{Hao Pan}
\address {Department of Mathematics, Nanjing
University, Nanjing 210093, People's Republic of China}
\email{haopan79@zoho.com}

 \begin{abstract}
 We prove that the primes of the form $x^2+y^2+1$ contain arbitrarily long non-trivial arithmetic progressions.
\end{abstract}
\maketitle

\section{Introduction}
\setcounter{lemma}{0}
\setcounter{theorem}{0}
\setcounter{corollary}{0}
\setcounter{remark}{0}
\setcounter{equation}{0}
\setcounter{conjecture}{0}

Let $\cP$ denote the set of all primes. The celebrated Green-Tao theorem \cite{GT08} asserts that $\cP$ contains arbitrarily long non-trivial arithmetic progressions. That is, for any $k\geq 3$, there exists positive integers $a$ and $d$ such that $a,a+d,\ldots,a+(k-1)d$ are all primes. In fact, they obtained a stronger result. For a subset $A\subseteq\cP$, define the relative upper density of $\cA$ by
$$
\overline{d}_\cP(\cA):=\limsup_{X\to\infty}\frac{|\cA\cap[1,X]|}{|\cP\cap[1,X]|}.
$$
Green and Tao proved that for any subset $\cA$ of primes with $\overline{d}_\cP(\cA)>0$, $A$ contains arbitrarily long non-trivial arithmetic progressions. There are three key ingredients in Green and Tao's proof: the Szemer\'edi theorem, a transference principle and a pseudorandom measure for primes.

Nowadays, the Green-Tao theorem has been generalized in different directions \cite{GT10, Le11, LW14, Tao06, TZ08,TZ015}. For example, a prime $p$ is called {\it Chen prime} provided that $p+2$ has at most two prime factors. The classical Chen theorem says that there exist infinitely many Chen primes. Green and Tao \cite{GT08} also claimed that using the similar discussions, one can prove that the Chen primes contains arbitrarily long non-trivial arithmetic progressions. And they gave a Fourier proof of the existence of infinitely many non-trivial three-term arithmetic progressions in the  Chen primes. Subsequently, the detailed proof of the extension of Green-Tao theorem to the Chen primes was given by Zhou in \cite{Zhou09}.

On the other hand, let us consider those primes which can be represented as the sum of two squares plus 1. Let $\cP_2$ denote the set of all such primes, i.e.,
$$
\cP_2:=\{p\text{ prime}:\, p=x^2+y^2+1,\ x,y\in\N\}.
$$
Linnik proved that $\cP_2$ contains infinitely many primes. In \cite{Iwaniec72}, Iwaniec proved that for any sufficiently large $X$,
$$
\big|\cP_2\cap[1,X]\big|\geq\frac{CX}{(\log X)^{\frac32}}
$$
for some constant $C>0$. For more about the primes in $\cP_2$, the reader may refer to \cite{Matomaki07,Matomaki08,Motohashi70,Wu98}. Recently, Ter\"av\"ainen \cite{Teravainen} proved that 
there exist infinitely many non-trivial three-term arithmetic progressions in any subset of $\cP_2$ with a positive relatively density, which extends the Roth-type theorem in the primes of Green \cite{Green05}.
It is natural to ask whether the Green-Tao theorem also can be extended to the primes in $\cP_2$. In this  paper, we shall give such an extension.
\begin{theorem}\label{main}
Suppose that $\cA$ is a subset of $\cP_2$ with the relatively density
$$
\overline{d}_{\cP_2}(\cA):=\limsup_{X\to\infty}\frac{|\cA\cap[1,X]|}{|\cP_2\cap[1,X]|}>0.
$$
Then $\cA$ contains arbitrarily long non-trivial arithmetic progressions.
\end{theorem}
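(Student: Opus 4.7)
The plan is to follow the three-step Green-Tao paradigm from \cite{GT08}: construct a pseudorandom majorant $\nu$ that dominates a normalized indicator of $\cA$, transfer the problem to dense subsets of a model space, and apply Szemer\'edi's theorem. Fix $k\geq 3$ and a large prime $N$. By Iwaniec's bound $|\cP_2\cap[1,X]|\gg X/(\log X)^{3/2}$, the density hypothesis on $\cA$ translates into a non-negative function $f:\Z_N\to\R_{\geq 0}$, supported on $\cA\cap[1,N/2]$ embedded into $\Z_N$, with mean bounded below in terms of $\overline{d}_{\cP_2}(\cA)$ and with $\|f\|_\infty=O((\log N)^{3/2})$. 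A $W$-trick with $W=\prod_{p\leq w}p$ is then used to fix $n$ in a residue class compatible both with primality and with $n-1$ being a sum of two squares, smoothing out the local biases of $\cP_2$.

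The new ingredient is the construction of $\nu$. Since $n\in\cP_2$ is equivalent to $n$ being prime together with the condition that every prime $q\equiv 3\pmod 4$ dividing $n-1$ does so to an even power, I would set
$$
\nu(n)\;=\;c_{k,N}\,\Lambda_R(n)^2\cdot\sigma(n-1),
$$
where $\Lambda_R$ is a Goldston-Pintz-Yildirim truncated von Mangoldt function at scale $R=N^{\eta_k}$ for some small $\eta_k>0$, and $\sigma$ is a Selberg-type half-dimensional sieve weight---in the spirit of Iwaniec \cite{Iwaniec72}---that majorizes the indicator of sums of two squares; its local factors at primes $q\equiv 3\pmod 4$ isolate the multiplicative structure carried by the quadratic character $\chi_{-4}$. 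The constant $c_{k,N}$ is chosen so that $\E\nu=1+o(1)$, and the construction is arranged so that $f\ll\nu$ pointwise on $\Z_N$.

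The principal technical obstacle is verifying that $\nu$ satisfies the $k$-linear forms and correlation conditions of \cite{GT08} (or the weaker hypotheses sufficient for the Conlon-Fox-Zhao relative Szemer\'edi theorem). Expanding $\Lambda_R(n+h_i)^2$ and $\sigma(n-1+h_i)$ as divisor sums reduces each moment $\E_n\prod_i\nu(n+h_i)$ to an average of multiplicative functions along linear configurations, whose local factors split into three regimes: primes $q\equiv 1\pmod 4$ (where $\chi_{-4}(q)=1$), primes $q\equiv 3\pmod 4$ (where $\chi_{-4}(q)=-1$ and the half-dimensional sieve suppresses odd-power contributions), and $q=2$. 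The prime-sieve inputs are supplied by the standard Goldston-Pintz-Yildirim singular series, while the sum-of-two-squares inputs reduce to bilinear averages of $\chi_{-4}$ twists handled by Iwaniec's half-dimensional sieve. A helpful simplification is that $n$ and $n-1$ are always coprime, so the two sieve factors act on arithmetically independent variables and the local densities largely factorize; the delicate work is controlling the interaction across the shifts $n+h_1,\ldots,n+h_k$ and $n-1+h_1,\ldots,n-1+h_k$ simultaneously. Ter\"av\"ainen's \cite{Teravainen} Fourier-analytic treatment of the three-term case provides a consistency check for the $k=3$ specialization.

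Once the pseudorandomness of $\nu$ is established, the Green-Tao-Ziegler transference principle yields a dense model $g:\Z_N\to[0,1]$ with $\|f-g\|_{U^{k-1}[N]}=o(1)$ and mean bounded below. Szemer\'edi's theorem applied to $g$ then produces $\gg N^2$ non-trivial $k$-term arithmetic progressions weighted by $f$, which translate via the usual removal of degenerate configurations to genuine $k$-term APs in $\cA$, completing the proof of Theorem~\ref{main}.
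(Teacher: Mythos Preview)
Your outline is essentially the paper's strategy: a pseudorandom majorant built as (GY prime weight)${}^2\times$(half-dimensional sum-of-two-squares weight)${}^2$, the Conlon--Fox--Zhao relative Szemer\'edi theorem so that only the linear-forms condition is needed, and a Goldston--Y{\i}ld{\i}r{\i}m Euler-product calculation to verify it. The one substantive device your sketch lacks is how the half-dimensional weight is actually realized so that both the majorization $f\leq\nu$ and the correlation estimate are tractable. The paper does \emph{not} sieve directly for ``$n-1$ is a sum of two squares'' (which carries an awkward even-exponent condition at primes $q\equiv 3\pmod 4$); instead it first passes, via pigeonhole over Iwaniec's bounds for $\cR_q=\{p:p=q^2x^2+q^2y^2+1,\ (x,y)=1\}$, to a fixed $q_0$ with $|\cA\cap\cR_{q_0}\cap[X,2X]|\gg X/(\log X)^{3/2}$. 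For $p\in\cR_{q_0}$ one has $q_0^{-2}(p-1)=x^2+y^2$ with $(x,y)=1$, and such integers have \emph{no} prime factor $\equiv 3\pmod 4$ at all. The half-dimensional sieve then collapses to ordinary inclusion--exclusion over primes $\equiv 3\pmod 4$, implemented via the M\"obius-type function $\mu_3$ (supported on squarefree products of such primes) and the weight $\Lambda_R^*(n)=\sum_{d\mid n}\mu_3(d)\chi(\log d/\log R)$. The measure becomes $\nu(n)\propto\Lambda_R\bigl(q_0^2(Wn+b)+1\bigr)^2\Lambda_R^*(Wn+b)^2$, and the $2m$ linear forms in the GY estimate split cleanly into an Euler product over $p\equiv 3\pmod 4$ (all $2m$ forms contribute) and one over $p\not\equiv 3\pmod 4$ (only the first $m$), each handled by a single asymptotic lemma. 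Your unspecified Selberg weight $\sigma$ would have to majorize inputs like $n-1=9$, a sum of two squares carrying a prime $\equiv 3\pmod 4$ to an even power, where a naive $\mu_3$-type truncation gives $0$; the $\cR_{q_0}$ reduction is exactly what makes the majorization automatic and the Euler-product bookkeeping clean.
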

The key to our proof is to construct a pseudorandom measure for those primes in $\cP_2$. In the next section, we shall first give the construction of such a pseudorandom measure. Then the proof of Theorem \ref{main} can be reduced to a Goldston-Y{\i}ld{\i}r{\i}m-type \cite{GY03} estimation, which will be proved in the third section.

We introduce several notions which will be used later. Suppose that $S$ is a finite subset and $f(x)$ is a function over $S$. Write
$$
E\big(f(x)|x\in S\big):=\frac{1}{|S|}\sum_{x\in S}f(x).
$$
For an assertion $P$, set ${\mathbf 1}_P=1$ or $0$ according to whether $P$ holds or not. Also, let $\phi$ denote the Euler totient function and let $\mu$ denote the M\"obius function.
\section{The pseudorandom measure}
\setcounter{lemma}{0}
\setcounter{theorem}{0}
\setcounter{corollary}{0}
\setcounter{remark}{0}
\setcounter{equation}{0}
\setcounter{conjecture}{0}

First, let us introduce the definition of the linear forms condition and the correlation condition. Suppose that $v\in\Z$ and $L_1,\ldots,L_{k}\in\Q$ are rational number whose numerators and denominators are all bounded. We call
$$
\psi(\bx):=L_1x_1+\cdots+L_kx_k+v
$$
a {\it linear form}, where $\bx=(x_1,\ldots,x_k)$. Suppose that $N$ is a sufficiently large prime. Let $\Z_N:=\Z/N\Z$ be the cyclic group of order $N$. Then $\psi$ also can be viewed as a linear form over $\Z_N$. Let $\nu:\,\Z_N\to\R$ be a non-negative function. Suppose that $h_0,k_0,m_0$ are positive integers. Suppose that 
$$
\E\big(\nu(\psi_1(\bx))\cdots\nu(\psi_h(\bx))\big|\bx\in\Z_N^k\big)=1+o_{h_0,k_0,m_0}(1)
$$
for any $1\leq h\leq h_0$, $1\leq k\leq k_0$ and the linear forms $\psi_i(\bx)=L_{i,1}x_1+\cdots+L_{i,k}x_k+v_i$ with the numerators and denominators of those $L_{i,j}$
all lie in $[-m_0,m_0]$. Then we say that $\nu$ obeys the {\it $(h_0,k_0,m_0)$-linear forms condition}. Similarly, suppose that for any $1\leq k\leq k_0$, there exists a non-negative weight function $\tau_k:\,\Z_N\to\R$ such that $\E(\tau_k(x)^s|x\in\Z_N)=O_{k,s}(1)$, and
$$
\E\big(\nu(x+v_1)\cdots\nu(x+v_k)\big| x\in\Z_N\big)\leq\sum_{1\leq i<j\leq k}\tau_k(v_i-v_j)
$$
for any $v_1,\ldots,v_k\in\Z_N$. Then say that $\nu$ obeys the {\it $k_0$-correlation condition}. 
$\nu(x)$ is call a {\it $m$-pseudorandom measure}, if $\nu$ both satisfies $(2^{m-1}m,3m-4,m)$-linear forms condition and $2^{m-1}$-correlation condition.
Green and Tao  
\cite[Theorem 3.5]{GT08} proved the following relatively Szemer\'edi theorem.
\begin{lemma} 
Suppose that $\delta>0$ and $m\geq 3$. Let $f(x)$ be a function over $\Z_N$ such that 
$\E\big(f(x)\big|x\in\Z_N\big)\geq\delta$, and 
$0\leq f(x)\leq \nu(x)$
for each $x\in\Z_N$, where $\nu$ is a $m$-pseudorandom measure over $\Z_N$. Then as $N\to\infty$,
\begin{equation}\label{fkaa}
\E\big(f(x)f(x+y)\cdots f\big(x+(m-1)y\big|x,y\in\Z_N\big)\geq c(m,\delta)+o_{k,\delta}(1),
\end{equation}
where $c(m,\delta)$ is a constant only depending on $m$ and $\delta$.
\end{lemma}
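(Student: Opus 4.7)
The plan is to follow Green and Tao's transference strategy. Write the $m$-term AP count as
$$
T(f):=\E\big(f(x)f(x+y)\cdots f(x+(m-1)y)\big|x,y\in\Z_N\big).
$$
Since $f$ is only bounded by the unbounded function $\nu$, one cannot directly apply the ordinary Szemer\'edi theorem; instead I want to replace $f$ by a bounded function $\tilde f$ (a ``dense model'') whose $m$-term AP count agrees with that of $f$ up to $o(1)$, and then apply Szemer\'edi to $\tilde f$.

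The first ingredient is a generalized von Neumann theorem: for functions $f_0,\ldots,f_{m-1}:\Z_N\to\R$ satisfying $|f_i|\leq\nu+1$,
$$
\E\big(f_0(x)f_1(x+y)\cdots f_{m-1}(x+(m-1)y)\big|x,y\in\Z_N\big)=O\big(\min_i\|f_i\|_{U^{m-1}}\big)+o(1),
$$
where $U^{m-1}$ denotes the Gowers uniformity norm of order $m-1$. The standard proof is $m-1$ successive Cauchy--Schwarz inequalities, each doubling the number of $\nu$-weights; after the final step one obtains a $2^{m-1}$-fold product of $\nu$ evaluated at linear forms in $O(m)$ variables whose average is $1+o(1)$, and this is precisely why the $(2^{m-1}m,3m-4,m)$-linear forms condition is built into the definition of $m$-pseudorandomness.

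The second ingredient is a dense model theorem: whenever $0\leq f\leq\nu$ and $\E f\geq\delta$, there exists $\tilde f:\Z_N\to[0,1]$ with $\E\tilde f\geq\delta$ and $\|f-\tilde f\|_{U^{m-1}}=o(1)$. The cleanest route is a Hahn--Banach separation in the Gowers-dual norm, which reduces the existence of $\tilde f$ to the assertion that $\nu$ is close to the constant function $1$ in that dual norm. Unwinding the definition of the dual norm leads to an expectation of $2^{m-1}$ shifted copies of $\nu$ multiplied together, and one finishes by invoking the $2^{m-1}$-correlation condition together with the linear forms condition to control the resulting sum.

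Once $\tilde f$ is in hand, Szemer\'edi's theorem in Varnavides's quantitative form yields $T(\tilde f)\geq c(m,\delta)$. Telescoping $T(f)-T(\tilde f)$ as a sum of $m$ differences, each bounded by a constant multiple of $\|f-\tilde f\|_{U^{m-1}}$ via the generalized von Neumann bound, gives $T(f)\geq c(m,\delta)+o(1)$ as required. The main obstacle is the dense model step: the Hahn--Banach duality is formal, but proving that $\nu-1$ has vanishing dual Gowers norm requires a delicate moment expansion, and it is exactly this expansion that dictates the specific parameters $(2^{m-1}m,3m-4,m)$ and $2^{m-1}$ appearing in the definition of an $m$-pseudorandom measure.
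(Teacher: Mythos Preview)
Your sketch is a correct outline of the Green--Tao transference argument (generalized von Neumann via repeated Cauchy--Schwarz and the linear forms condition, dense model via Hahn--Banach and the correlation condition, then Varnavides--Szemer\'edi on the bounded model). However, the paper does not supply a proof of this lemma at all: it is simply quoted as \cite[Theorem~3.5]{GT08} and used as a black box, so there is nothing in the paper's own proof to compare against. Your proposal therefore goes well beyond what the paper does here, and what you have written is essentially the standard Green--Tao strategy rather than an alternative route.
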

In \cite{CFZ15}, Conlon, Fox and Zhao improved Green-Tao's transference principle and showed that the requirement concerning the correlation condition is factly unnecessary. Suppose that as $N\to\infty$,
$$
\E\bigg(\prod_{j=1}^k\prod_{\omega\in\{0,1\}^{\{1,\ldots,k\}\setminus\{j\}}}\nu\bigg(\sum_{i=1}^k(i-j)x_i^{(\omega_i)}\bigg)^{\varrho_{j,\omega}}\bigg|x_1^{(0)},x_1^{(1)},\ldots,x_k^{(0)},x_k^{(1)}\in\Z_N\bigg)=1+o_k(1)
$$
for any choice of $\varrho_{j,\omega}\in\{0,1\}$. Then we say $\nu$ obeys the {\it $k$-linear forms condition}.
Conlon, Fox and Zhao proved that
\begin{lemma}\label{CFZThm}
Suppose that $\delta>0$ and $m\geq 3$. Let $f(x)$ be a non-negative function over $\Z_N$ such that 
$\E\big(f(x)\big|x\in\Z_N\big)\geq\delta$
and  $f(x)\leq \nu(x)$
for some function $\nu$ obeys the $m$-linear forms condition. Then (\ref{fkaa}) is also valid.
\end{lemma}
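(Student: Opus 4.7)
The plan is to follow the Conlon-Fox-Zhao transference framework, which reduces the conclusion to two independent ingredients: a \emph{dense model theorem} that replaces $f$ with a bounded surrogate $g$ of the same mean, and a \emph{relative counting lemma} that shows the count of $m$-term arithmetic progressions is essentially unchanged by this replacement. Once both are in hand, the classical Szemer\'edi theorem applied to $g$, which is bounded by $1$ with mean at least $\delta$, finishes the argument.

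For the dense model theorem, I would invoke a Hahn--Banach / min--max duality (in the spirit of Reingold--Trevisan--Tulsiani--Vadhan) to produce $g:\Z_N\to[0,1]$ with $\E(g(x)\,|\,x\in\Z_N)=\E(f(x)\,|\,x\in\Z_N)$ and with $f-g$ small in an ``$m$-cut'' norm dual to the $m$-AP counting functional. The prerequisite for applying this duality is that $\nu-1$ is itself small in that cut norm, which is arranged by a first application of the linear forms condition to products of the form $\nu(x)-1$.

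For the counting lemma, I would interpret
$$
\E\big(f(x)f(x+y)\cdots f(x+(m-1)y)\big|x,y\in\Z_N\big)
$$
as a weighted homomorphism count from a specific hypergraph into $\Z_N$, and run the CFZ \emph{densification} scheme: replace one factor of $f$ by $g$ at a time, controlling the error of each swap by Cauchy--Schwarz, and iterate until no $f$ remains. A direct bookkeeping of the configurations generated along this chain reveals that each error term is dominated by precisely the product of $\nu$'s appearing in the hypothesis, which explains why this particular $m$-linear forms condition (and nothing stronger) is sufficient.

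I expect the main obstacle to be the counting lemma rather than the dense model step. The difficulty is to orchestrate the sequence of Cauchy--Schwarz applications so that every error term falls within the scope of the listed linear configurations, and to simultaneously track the parameters $\varrho_{j,\omega}\in\{0,1\}$ that record which factors of $\nu$ actually appear at each stage. The improvement over the original Green--Tao argument stems from exactly this bookkeeping: because the densification never produces the pairwise-overlap patterns that the correlation condition was designed to control, the correlation condition becomes redundant and the single linear forms condition displayed above is enough.
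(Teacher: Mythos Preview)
The paper does not give a proof of this lemma at all: it is stated as a quotation of the main theorem of Conlon, Fox and Zhao \cite{CFZ15}, and the authors simply invoke it as a black box. So there is nothing in the paper to compare your argument against.

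That said, your outline is a faithful summary of the Conlon--Fox--Zhao proof itself: the dense model theorem (via Hahn--Banach duality, yielding a bounded $g$ indistinguishable from $f$ in the relevant cut norm) together with the relative counting lemma proved by the densification/Cauchy--Schwarz scheme, after which Szemer\'edi's theorem applied to $g$ gives the bound. Your remark that the bookkeeping of the densification step produces exactly the linear patterns in the hypothesis, and never the pairwise patterns that the correlation condition was meant to handle, is precisely the point of \cite{CFZ15}. So if the goal were to reproduce the CFZ argument rather than to cite it, your plan is correct; for the purposes of this paper, however, a one-line citation suffices.
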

Clearly the $(2^{m-1}m,2m,m)$-linear forms condition is stronger than the $m$-linear forms condition. So for every $m\geq 3$, we need to construct a pseudorandom measure obeying the $(2^{m-1}m,2m,m)$-linear forms condition for those primes in $\cP_2$.

For any positive integer $q$, let
$$
\cR_q=\{p\text{ prime}:\,p=q^2x^2+q^2y^2+1,\text{ where }x,y\in\N\text{ and }(x,y)=1\}.
$$
In \cite{Iwaniec72}, Iwaniec proved that 
\begin{equation}\label{Iwaniec}
\frac{c_1}{\phi(q^2)}\cdot\frac{X}{(\log X)^{\frac32}}\leq |\cR_q\cap[1,X]|\leq \frac{c_2}{\phi(q^2)}\cdot\frac{X}{(\log X)^{\frac32}}
\end{equation}
for any sufficiently large $X$, where $c_1,c_2>0$ are constants. Of course, by following Iwaniec's discussions, we may easily obtain that
\begin{equation}\label{Iwaniec2}
\frac{C_1}{\phi(q^2)}\cdot\frac{X}{(\log X)^{\frac32}}\leq |\cR_q\cap[X,2X]|\leq \frac{C_2}{\phi(q^2)}\cdot\frac{X}{(\log X)^{\frac32}}
\end{equation}
for some constants $C_1,C_2>0$.

Suppose that $\cA$ is a subset of $\cP_2$ with a positive relatively upper density.
Let $$\delta_0=\frac{\overline{d}_{\cP_2}(\cA)}{2}.$$
Assume that $X$ is sufficiently large and
$$
\big|\cA\cap[X,2X]\big|\geq\delta_0\big|\cP_2\cap[X,2X]\big|.
$$
Let
$$
\eta_0=\sum_{q=1}^{\infty}\frac{1}{\phi(q^2)}.
$$
Since $\eta_0<+\infty$, there exists $Q_0>0$ such that
$$
\sum_{q>Q_0}^{\infty}\frac{1}{\phi(q^2)}<\frac{C_1}{2C_2}\cdot\frac{\delta_0\eta_0}{2},
$$
i.e.,
$$
\sum_{q>Q_0}\big|\cR_q\cap[X,2X]\big|\leq\frac{\delta_0|\cP_2\cap[X,2X]|}{2}\leq \frac{|\cA\cap[X,2X]|}{2}.
$$
By the pigeonhole principle, there exists $1\leq q_0\leq Q_0$ such that
$$
\big|\cR_{q_0}\cap\cA\cap[X,2X]\big|\geq\frac{|\cA\cap[X,2X]|}{2Q_0}\geq\frac{\delta_0\eta_0}{2Q_0}\cdot\frac{C_1X}{(\log X)^{\frac32}}.
$$

Let $w:=w(X)$ be an increasing function which very slowly tends to $+\infty$ as $X\to+\infty$. and let
$$W=\prod_{p\leq w}p.$$ 
Let
$$
\cS_W=\{1\leq b\leq W:\,(q_0^2b+1,W)=1, (b,W)\text{ has no prime factor of the form }4k+3\}.
$$
By the Chinese remainder theorem, we have
$$
\cS_W=W\prod_{\substack{p\equiv 3\pmod{4}\\ p\mid W,\ p\nmid q_0}}\bigg(1-\frac{2}{p}\bigg)\prod_{\substack{p\equiv 3\pmod{4}\\ p\mid q_0}}\bigg(1-\frac{1}{p}\bigg)
\prod_{\substack{p\not\equiv 3\pmod{4}\\ p\mid W,\ p\nmid q_0}}\bigg(1-\frac{1}{p}\bigg)=O\bigg(\frac{q_0^{\frac12}\phi(W)^{\frac32}}{\phi(q_0)^{\frac12}W^{\frac12}}\bigg).
$$
Conversely, if a prime $p\equiv q_0^2b+1\pmod{q_0^2W}$ can be written as $p=q_0^2(x^2+y^2)+1$ with $(x,y)=1$, then clearly we must have $(b,W)$ has no prime factor of the form $4k+3$.
Hence 
$$
\sum_{b\in S_{W}}|\{p\in\cR_{q_0}\cap\cA\cap[X,2X]:\,q_0^{-2}(p-1)\equiv b\pmod{W}\}|\geq \frac{\delta_0\eta_0}{2Q_0}\cdot\frac{C_1X}{(\log X)^{\frac32}}.
$$
By the pigeonhole principle, there exists $b\in S_W$ such that
$$
|\{p\in\cR_{q_0}\cap\cA\cap[X,2X]:\,q_0^{-2}(p-1)\equiv b\pmod{W}\}|\geq \frac{\phi(q_0)^{\frac12}W^{\frac12}}{q_0^{\frac12}\phi(W)^{\frac32}}\cdot\frac{\delta_0\eta_0}{Q_0}\cdot\frac{C_3X}{(\log X)^{\frac32}}
$$
for some constant $C_3>0$. Note that $\delta_0,\eta_0,q_0,Q_0$ are all positive constants only depending on the subset $\cA$. Let
$$
C_0=\frac{\phi(q_0)^{\frac12}}{q_0^{\frac12}}\cdot\frac{\delta_0\eta_0}{Q_0}\cdot\frac{C_3}{2}.
$$

Let
$$
\epsilon_m=\frac{1}{4^m(m+4)!}.
$$
Let $N$ be a prime lying in $[q_0^{-2}W^{-1}\epsilon_m^{-1}X,q_0^{-2}W^{-1}\epsilon_m^{-1}X+X(\log X)^{-2}]$. According to the prime number theorem with a remainder term, such prime $N$ always exists.
As we have shown,
\begin{equation}\label{nq02Wnb1Rq0}
\big|\{n\in[\epsilon_mN,2\epsilon_mN]:\,q_0^2(Wn+b)+1\in\cR_{q_0}\cap\cA\}\big|\geq \frac{W^{\frac12}}{\phi(W)^{\frac32}}\cdot\frac{C_0X}{(\log X)^{\frac32}}.
\end{equation}
Define the Mobious-type function
$$
\mu_3(n):=
\begin{cases}
1,&\text{if } n=1,\\
(-1)^r,& \text{if }n=p_1 \dots p_r,\ p_1,\ldots,p_r\text{ are distinct primes with }p_i\equiv3\pmod{4},\\
0,& \text{otherwise.}
\end{cases}
$$
Let
$$
R=N^{\frac{1}{2^{m+4}m}}.
$$
Let $\chi: \R \to \R$ be a smooth function such that $\chi(0)=1$
and $\chi$ is supported on the interval $[-1, 1]$.
Define
$$
\Lambda_R(n):= \sum_{\substack{d\mid n}} \mu(d) \chi\bigg(\frac{\log d}{\log R}\bigg)
$$
and
$$
\Lambda_{R}^*(n):= \sum_{\substack{d\mid n}} \mu_3(d) \chi\bigg(\frac{\log d}{\log R}\bigg).
$$
Evidently, if $q_0^2(Wn+b)+1\in\cR_{q_0}$, then
$$
\Lambda_R\big(q_0^2(Wn+b)+1\big)\Lambda_R^*(Wn+b)=1.
$$

We are ready to define our pseudorandom measure. Let
$$
\alpha_0=\lim_{s\to 1}\frac1{s-1}\prod_{p\equiv3\pmod{4}}\bigg(1-\frac{1}{p^s}\bigg)^2
$$
By our knowledge on the Riemann $\zeta$-function and the Dirichlet $L$-function, we have $\alpha_0>0$ and
$$
\lim_{s\to 1}\frac1{s-1}\prod_{p\not\equiv3\pmod{4}}\bigg(1-\frac{1}{p^s}\bigg)^2=\frac1{\alpha_0}.
$$ 
For any $n\in[\epsilon_mN,2\epsilon_mN]$, let
$$
\nu(n)=
\frac{(\log R)^{\frac32}}{\alpha_0^{\frac12}\cdot C_{\chi}}\prod_{\substack{p\not\equiv 3\pmod{4}\\ p\mid W}}\bigg(1-\frac{1}{p}\bigg)\prod_{\substack{p\equiv 3\pmod{4}\\ p\mid W}}\bigg(1-\frac{1}{p}\bigg)^2\cdot\Lambda_R\big(q_0^2(Wn+b)+1\big)^2\Lambda_{R}^*(Wn+b)^2,
$$
and let $\nu(n)=1$ for the other $n\in\Z_N$, where $C_{\chi}>0$ is a constant only depending on $\chi$ which we shall see later. Let
$$
f(n)=\frac{(\log R)^{\frac32}}{\alpha_0^{\frac12}\cdot C_{\chi}}\prod_{\substack{p\not\equiv 3\pmod{4}\\ p\mid W}}\bigg(1-\frac{1}{p}\bigg)\prod_{\substack{p\equiv 3\pmod{4}\\ p\mid W}}\bigg(1-\frac{1}{p}\bigg)^2
$$
provided that $n\in[\epsilon_mN,2\epsilon_mN]$ and $q_0^2(Wn+b)+1\in\cR_{q_0}\cap\cA$, 
Also, set $f(n)=0$ for the other $n\in\Z_N$.
In view of (\ref{nq02Wnb1Rq0}), clearly $f(x)\leq\nu(x)$ for each $x\in\Z_N$ and
$$
\E\big(f(x)\big|x\in\Z_N\big)\geq\frac{\epsilon_m\upsilon_0C_0}{2^{2m+8}m^2\cdot\alpha_0^{\frac12}C_\chi} >0.
$$
where
$$
\upsilon_0=\min_{x\geq 2}\prod_{\substack{p\not\equiv 3\pmod{4}\\ p\leq x}}\bigg(1-\frac{1}{p}\bigg)^{-\frac12}\prod_{\substack{p\equiv 3\pmod{4}\\ p\leq x}}\bigg(1-\frac{1}{p}\bigg)^{\frac12}.
$$
Hence by Lemma \ref{CFZThm}, if $\nu$ obeys the $m$-linear forms condition, then
$$
\E\big(f(x)f(x+y)\cdots f\big(x+(m-1)y)\big|x,y\in\Z_N\big)\geq c_{m,\cA}
$$
for some constant $c_{m,\cA}>0$ only depending  on $m$ and $\cA$. Since $\epsilon_m<1/m$, $\cR_{q_0}\cap\cA$ contains an non-trivial arithmetic progression of length $m$.

According to Green and Tao's discussions in the proof of \cite[Proposition 9.8]{GT08}, in order to show $\nu$ obeys the $m$-linear forms condition, it suffices to prove the following Goldston-Y{\i}ld{\i}r{\i}m-type estimation.
\begin{proposition}\label{GY}  Suppose that $m$ and $h$ are two positive integers.  Let
$$\psi_i(\mathbf{x}) := \sum_{j=1}^h L_{ij} x_j + v_i,\qquad i=1,\ldots,m,$$ 
be linear forms over $\Z_N$ such that

\medskip\noindent
(i) the coefficients $L_{ij}$ are integers and $|L_{ij}| \leq \sqrt{w}/2$ for any $1\leq i\leq m$ and $1\leq j\leq h$;

\medskip\noindent
(ii) the $h$-tuples $(L_{ij})_{j=1}^h$ are never identically zero, and that no two $h$-tuples are rational multiples of each other.

\medskip\noindent
Write $$\theta_i:=q_0^2(W\psi_i+b)+1,\qquad\theta_{m+i}:=W\psi_i+b$$ for each $1\leq i\leq m$. Suppose that $\bB$ is a product $\prod_{i=1}^h I_i \subset \R^h$, where each $I_i\subseteq\R$ is an interval of length at least $R^{10m}$. Then

\begin{align}\label{ELambdaRTheta}
&\E\big(\Lambda_R(\theta_1(\mathbf{x}))^2 \dots \Lambda_R(\theta_m(\mathbf{x}))^2\Lambda_{R,w}^*(\theta_{m+1}(\mathbf{x}))^2\dots \Lambda_{R,w}^*(\theta_{2m}(\mathbf{x}))^2\big| \bx \in \bB\big)\notag\\
 =&\bigg(\big(C_{\chi}+o_w(1)\big)\cdot\frac{\alpha_0^{\frac12}}{(\log R)^{\frac32}}\prod_{\substack{p\not\equiv 3\pmod{4}\\ p\mid W}}\frac{p}{p-1}\cdot\prod_{\substack{p\equiv 3\pmod{4}\\ p\mid W}}\frac{p^{2}}{(p-1)^{2}}\bigg)^m,
\end{align}
where $C_{\chi}>0$ is a constant only depending on $\chi$, and $o_w(1)$ means a term which tends to $0$ as $w\to+\infty$.
\end{proposition}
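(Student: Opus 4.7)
The plan is to carry out the Goldston--Y{\i}ld{\i}r{\i}m contour integration in the style of \cite[§9]{GT08}, adapted to the joint appearance of the ordinary divisor sum $\Lambda_R$ and its $\mu_3$-twisted variant $\Lambda_R^*$. Note first that the hypothesis $|L_{ij}|\leq \sqrt{w}/2$, combined with the definition of $\cS_W$ and the choice of $b$, guarantees that for every prime $p\mid W$ and every $\bx$ none of the forms $\theta_i(\bx)$ vanishes mod $p$; this is the input that tames the small primes via the $W$-trick.

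I would insert a standard inverse Mellin representation of $\chi$, expand each of the $4m$ squares, and interchange sum with integral to reduce the left-hand side of \eqref{ELambdaRTheta} to a $4m$-dimensional contour integral whose integrand involves
$$
\sum_{d_1,d_1',\ldots,d_{2m},d_{2m}'}\Big(\prod_{i\leq m}\mu(d_i)\mu(d_i')\Big)\Big(\prod_{i>m}\mu_3(d_i)\mu_3(d_i')\Big)\,\E_{\bx\in\bB}\prod_{i=1}^{2m}\mathbf{1}_{[d_i,d_i']\mid\theta_i(\bx)},
$$
twisted by an inverse Mellin kernel in each divisor variable. Because each $I_j$ has length at least $R^{10m}$ while $[d_i,d_i']\leq R^{O(1)}$, the Chinese Remainder Theorem combined with hypothesis~(ii) factorises the inner expectation as a product of local densities, so the whole divisor sum becomes an Euler product $\prod_p\mathfrak{c}_p(\boldsymbol{\xi})$ in the $4m$ Mellin variables.

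I would then compute $\mathfrak{c}_p$ in three regimes. For $p\mid W$ the $W$-trick forces $\mathfrak{c}_p\equiv 1$; the corresponding normalising factor is $p/(p-1)$ for $p\not\equiv 3\pmod 4$ and $p^2/(p-1)^2$ for $p\equiv 3\pmod 4$, matching the right-hand side of \eqref{ELambdaRTheta}, with the doubled exponent for $p\equiv 3\pmod 4$ reflecting the fact that the $\Lambda_R^*$ variables also see such primes. For $p\nmid W$ with $p\not\equiv 3\pmod 4$, the $\mu_3$-support kills any $\Lambda_R^*$-contribution, so $\mathfrak{c}_p$ reduces to the classical Green--Tao local factor for $\Lambda_R^2$, differing from $m$ copies of the local factor of $\zeta$ by $1+O(1/p^2)$. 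For $p\nmid W$ with $p\equiv 3\pmod 4$, the $\Lambda_R$ and $\Lambda_R^*$ sums couple into a single four-variable local factor; after dividing by $m$ copies each of the local factor of $\zeta$ and of the restricted product $\prod_{p\equiv 3\pmod 4}(1-1/p^s)^{-1}$, the quotient is again $1+O(1/p^2)$. The full product over primes is then absolutely convergent and equals $1+o_w(1)$.

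The archimedean step is then standard contour shifting. Using the asymptotic $\prod_{p\equiv 3\pmod 4}(1-1/p^s)^{-1}\sim\alpha_0^{-1/2}(s-1)^{-1/2}$ near $s=1$, each of the $m$ $\Lambda_R^2$-pairs contributes a factor $\log R$ from the simple pole of $\zeta$, and each of the $m$ $\Lambda_R^{*2}$-pairs contributes a factor $\alpha_0^{1/2}(\log R)^{1/2}$ from the above branch-type singularity. Multiplying yields $(\log R)^{3m/2}$ in the denominator and the coefficient $\alpha_0^{m/2}$, with $C_\chi$ emerging as the resulting archimedean integral depending only on $\chi$. The principal obstacle will be the local analysis at primes $p\equiv 3\pmod 4$, where $\Lambda_R$ and $\Lambda_R^*$ couple into a four-variable local factor; verifying that this coupling decouples in the contour-integration limit to reproduce exactly the density $\alpha_0^{1/2}$ that governs $|\cP_2\cap[1,X]|$ through Iwaniec's estimate~\eqref{Iwaniec} is the heart of the computation.
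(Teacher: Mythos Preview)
Your proposal is correct and follows essentially the same route as the paper: expand the divisor sums, replace the box average by a product of local densities via the Chinese Remainder Theorem (using the length hypothesis $|I_i|\geq R^{10m}$), insert the Fourier/Mellin representation of $\chi$, and then evaluate the resulting Euler product by comparison with $\zeta$ and with $\prod_{p\equiv 3\pmod 4}(1-p^{-s})^{-1}$ to isolate the singularity $\alpha_0^{m/2}(\log R)^{-3m/2}$, the $W$-factors, and the archimedean constant $C_\chi$. One small imprecision: it is not literally true that $\theta_{m+i}(\bx)\not\equiv 0\pmod p$ for \emph{every} $p\mid W$ (only for those $p\equiv 3\pmod 4$, via the condition on $(b,W)$), but since $\mu_3$ is supported on products of such primes this is exactly what is needed, and your local computation $\mathfrak{c}_p\equiv 1$ for $p\mid W$ is correct.
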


\section{The Goldston-Y{\i}ld{\i}r{\i}m-type estimation}
\setcounter{lemma}{0}
\setcounter{theorem}{0}
\setcounter{corollary}{0}
\setcounter{remark}{0}
\setcounter{equation}{0}
\setcounter{conjecture}{0}

In this section, the proof of Proposition \ref{GY} will be given. We shall follow the way of Tao in \cite{Tao}.
\begin{lemma}\label{cpjpsjL}
 Let $c_1,c_2,\ldots,c_k$ be some integers. For each prime $p$, arrange the bounded complex numbers $c_{p,1},\ldots,c_{p,k}$ such that
\begin{equation}\label{cpjcjp1}
c_{p,j}=c_j+O(p^{-1})
\end{equation}
unless $p$ divides $W$. Then
\begin{equation}\label{cpjpsjG1}
\prod_{p \equiv 3 \pmod{4}}\bigg(1 -\sum_{j=1}^k \frac{c_{p,j}}{p^{s_j}}\bigg) =G_1\cdot\big(1+O(H)\big)\prod_{j=1}^k\big(\alpha_0(s_j-1)\big)^{\frac{1}{2}c_{j}},
\end{equation}
where $s_1,\ldots,s_k$ are bounded complex numbers with $\Re(s_j)\geq 1$ and
$$
G_1=\prod_{p \equiv 3 \pmod{4}}\frac{1-p^{-1}(c_{p,1}+\cdots+c_{p,k})}{(1-p^{-1})^{c_1+\cdots+c_k}},\quad
H=\log w\cdot\max_{1\leq j\leq k}\{|s_j-1|\}.
$$
Also, we have
\begin{equation}\label{cpjpsjG2}
\prod_{p \not\equiv 3 \pmod{4}}\bigg(1 -\sum_{j=1}^k \frac{c_{p,j}}{p^{s_j}}\bigg) =G_2\cdot\big(1+O(H)\big)\prod_{j=1}^k\big(\alpha_0^{-1}(s_j-1)\big)^{\frac{1}{2}c_{j}},
\end{equation}
where
$$
G_2=\prod_{p\not\equiv 3 \pmod{4}}\frac{1-p^{-1}(c_{p,1}+\cdots+c_{p,k})}{(1-p^{-1})^{c_1+\cdots+c_k}}.
$$
Furthermore, the implied constants in (\ref{cpjpsjG1}) and (\ref{cpjpsjG2}) only depend on $k$ and the bounds of those $c_{p,j}$ and $s_j$.
\end{lemma}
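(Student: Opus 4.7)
The plan is to perform a standard Euler-product factorisation, splitting each local factor into a ``singular'' piece that captures the $s_j\to 1$ behaviour and a ``regular'' piece whose infinite product assembles into $G_1$. Concretely, I would write
$$1-\sum_{j=1}^k\frac{c_{p,j}}{p^{s_j}}=\prod_{j=1}^k(1-p^{-s_j})^{c_j}\cdot E_p(s_1,\dots,s_k),$$
where $E_p$ denotes the defining ratio, so that
$$\prod_{p\equiv 3\pmod{4}}\bigg(1-\sum_j\frac{c_{p,j}}{p^{s_j}}\bigg)=\prod_{j=1}^k\bigg(\prod_{p\equiv 3\pmod{4}}(1-p^{-s_j})^{c_j}\bigg)\cdot\prod_{p\equiv 3\pmod{4}}E_p(s_1,\dots,s_k).$$
It then remains to evaluate the two factors on the right and to check that their product has the stated form.

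For the singular factor, I would invoke the defining limit of $\alpha_0$ directly: the function $s\mapsto(s-1)^{-1}\prod_{p\equiv 3\pmod{4}}(1-p^{-s})^2$ extends to a neighbourhood of $s=1$ with value $\alpha_0$ (by the standard analysis of $\zeta(s)L(s,\chi_{-4})$ at $s=1$), so taking square roots and raising to the $c_j$-th power yields
$$\prod_{p\equiv 3\pmod{4}}(1-p^{-s_j})^{c_j}=\big(\alpha_0(s_j-1)\big)^{c_j/2}\big(1+O(|s_j-1|)\big),$$
and multiplying over $j$ produces the desired main factor $\prod_j(\alpha_0(s_j-1))^{c_j/2}$ with residual $1+O(\max_j|s_j-1|)$, which is safely absorbed into $1+O(H)$.

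For the regular factor, Taylor expansion at $s_j=1$ gives $E_p(1,\dots,1)=(1-p^{-1}\sum_j c_{p,j})/(1-p^{-1})^{\sum_j c_j}$, which is exactly the local factor of $G_1$. Using the hypothesis $c_{p,j}=c_j+O(1/p)$ for $p\nmid W$ one verifies $E_p(1,\dots,1)=1+O(1/p^2)$, so the infinite product defining $G_1$ converges absolutely. To compare $E_p(s_1,\dots,s_k)$ to $E_p(1,\dots,1)$ I would split the primes into two ranges. For $p>w$, the same Taylor expansion, performed now with $p^{-s_j}$ in place of $p^{-1}$, is uniform in the $s_j$ near $1$ and yields $E_p(s_1,\dots,s_k)/E_p(1,\dots,1)=1+O(1/p^2)$, whose tail product is $1+O(1/w)=1+o_w(1)$. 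For $p\leq w$, i.e.\ $p\mid W$, I would use the pointwise estimate $|p^{-s_j}-p^{-1}|\leq|s_j-1|(\log p)/p\cdot(1+o(1))$ inside $\log E_p(s_1,\dots,s_k)-\log E_p(1,\dots,1)$ and sum via Mertens' theorem $\sum_{p\leq w}(\log p)/p=\log w+O(1)$ to obtain a total contribution of order $\log w\cdot\max_j|s_j-1|=H$, exactly as required.

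The main obstacle will be the uniform bookkeeping of the errors across the two prime ranges, since it is precisely the small-prime range $p\mid W$ that generates the $\log w$ factor in $H$, and the implied constants must depend only on $k$ and on the bounds for the $c_{p,j}$ and $s_j$; the Taylor expansions themselves are routine but must be carried out with care. Once this is settled, \eqref{cpjpsjG1} follows by combining the singular and regular contributions, and the companion assertion \eqref{cpjpsjG2} is obtained by the same argument, using instead the limit $\lim_{s\to 1}(s-1)^{-1}\prod_{p\not\equiv 3\pmod{4}}(1-p^{-s})^2=\alpha_0^{-1}$ recorded just above the lemma.
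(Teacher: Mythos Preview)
Your plan is essentially the paper's own proof: factor out $\prod_j(1-p^{-s_j})^{c_j}$, evaluate that ``singular'' product via the defining limit of $\alpha_0$, and then show by Taylor expansion at $s_j=1$ that the remaining ratio $E_p$ assembles into $G_1\cdot(1+O(H))$, with the $\log w$ arising from $\sum_{p\le w}(\log p)/p$.

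One point of precision worth tightening. For $p\nmid W$ you write $E_p(s)/E_p(1)=1+O(1/p^2)$ uniformly, giving a tail contribution $1+O(1/w)$. But $O(1/w)$ is \emph{not} in general $O(H)$: take $s_j\to 1$ with $w$ fixed. The paper handles this by actually computing $\partial E_p/\partial s_j$ at $s_j=1$ and finding it to be $O((\log p)/p^2)$, so that
\[
E_p(s_1,\dots,s_k)=E_p(1,\dots,1)+O\Big(\tfrac{\log p}{p^2}\max_j|s_j-1|\Big),
\]
and the tail product over $p>w$ contributes $1+O(\max_j|s_j-1|)\subset 1+O(H)$. Your Taylor-expansion step already contains this derivative implicitly; you just need to retain the factor $\max_j|s_j-1|$ rather than discard it, and then the bound matches the lemma as stated.
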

\begin{proof}
According to the definition of $\alpha_0$, we have
$$\prod_{p\equiv 3\pmod{4}}\prod_{j=1}^{k}\bigg(1-\frac1{p^{s_j}}\bigg)^{c_j}=\big(1+o_k(1)\big)\prod_{j=1}^{k}\big(\alpha_0(s_j-1)\big)^{\frac{1}{2}c_{j}}$$
and
$$\prod_{p\not\equiv 3\pmod{4}}\prod_{j=1}^{k}\bigg(1-\frac1{p^{s_j}}\bigg)^{c_j}=\big(1+o_k(1)\big)\prod_{j=1}^{k}\big(\alpha_0^{-1}(s_j-1)\big)^{\frac{1}{2}c_{j}}$$
So it suffices to show that
\begin{equation}\label{p341psjcpjG1}
\prod_{p\equiv 3\pmod{4}}\frac{1-(p^{-s_1}c_{p,1}+\cdots+
p^{-s_k}c_{p,k})}{(1-p^{-s_1})^{c_1}\cdots(1-p^{-s_k})^{c_k}}=G_1\cdot\big(1+O(H)\big)
\end{equation}
and
\begin{equation}\label{pn341psjcpjG2}
\prod_{p\not\equiv 3\pmod{4}}\frac{1-(p^{-s_1}c_{p,1}+\cdots+
p^{-s_k}c_{p,k})}{(1-p^{-s_1})^{c_1}\cdots(1-p^{-s_k})^{c_k}}=G_2\cdot\big(1+O(H)\big).
\end{equation}
Here we only prove (\ref{p341psjcpjG1}), since (\ref{pn341psjcpjG2}) is very similar.
Suppose that $p\nmid W$. By (\ref{cpjcjp1}),
$$
\sum_{j=1}^k\frac{c_{p,j}}{p^{s_j}}=
\sum_{j=1}^k\frac{c_{j}}{p^{s_j}}+O\bigg(\frac1{p^2}\bigg)
$$
since $\Re(s_1),\ldots,\Re(s_k)\geq 1$.
Then we have
\begin{align*}
\frac{1-(p^{-s_1}c_{p,1}+\cdots+
p^{-s_k}c_{p,k})}{(1-p^{-s_1})^{c_1}\cdots(1-p^{-s_k})^{c_k}}=\frac{1-(p^{-s_1}c_{1}+\cdots+
p^{-s_k}c_{k})}{(1-p^{-s_1})^{c_1}\cdots(1-p^{-s_k})^{c_k}}+O_k(p^{-2}).
\end{align*}
Clearly for each $1\leq j\leq k$,
\begin{align*}
&\lim_{s_1,\ldots,s_k\to 1}\frac{\partial}{\partial s_j}\bigg(\frac{1-(p^{-s_1}c_{1}+\cdots+
p^{-s_k}c_{k})}{(1-p^{-s_1})^{c_1}\cdots(1-p^{-s_k})^{c_k}}\bigg)\\
=&
\lim_{s_1,\ldots,s_k\to 1}\log p\cdot\frac{p^{-s_j}c_j\cdot(1-p^{-s_j})-\big(1-(p^{-s_1}c_{1}+\cdots+
p^{-s_k}c_{k})\big)\cdot p^{-s_j}c_j}{(1-p^{-s_1})^{c_1}\cdots(1-p^{-s_j})^{c_j+1}\cdots(1-p^{-s_k})^{c_k}}\\
=&O\bigg(\frac{\log p}{p^2}\bigg).
\end{align*}
It follows that
$$\frac{1-(p^{-s_1}c_{1}+\cdots+
p^{-s_k}c_{k})}{(1-p^{-s_1})^{c_1}\cdots(1-p^{-s_k})^{c_k}}
=\frac{1-p^{-1}(c_{1}+\cdots+
c_{k})}{(1-p^{-1})^{c_1+\cdots+c_k}}+O\bigg(\frac{\log p}{p^2}\cdot\max_{1\leq j\leq k}\{|s_j-1|\}\bigg).$$
as $s_1,\ldots,s_k$ tend to $1$.
Similarly, if $p$ divides $W$, we also have
\begin{align*}
&\frac{1-(p^{-s_1}c_{p,1}+\cdots+
p^{-s_k}c_{p,k})}{(1-p^{-s_1})^{c_1}\cdots(1-p^{-s_k})^{c_k}}\\
=&\frac{1-(p^{-s_1}c_{p,1}+\cdots+
p^{-s_k}c_{p,k})}{(1-p^{-s_1})^{c_{p,1}}\cdots(1-p^{-s_k})^{c_{p,k}}}\cdot\prod_{j=1}^k(1-p^{-s_j})^{c_{p,j}-c_j}
\\
=&\frac{1-(p^{-1}c_{p,1}+\cdots+
p^{-s_k}c_{p,k})}{(1-p^{-1})^{c_1+\cdots+c_k}}+O\bigg(\max_{1\leq j\leq k}\{|s_j-1|\}\cdot\frac{\log p}{p}\sum_{j=1}^k|c_{p,j}-c_j|\bigg)
\end{align*}
as $s_1,\ldots,s_k\to 1$.
Note that by the prime number theorem,
$$
\sum_{p\mid W}\frac{\log p}{p}=\sum_{p\leq w}\frac{\log p}{p}=O\big(\log w\big).
$$
Multiplying all these above estimates together, we may get (\ref{p341psjcpjG1}). 
\end{proof}
Now we are ready to prove Proposition \ref{GY}. 
Let
$$u_j(n):=
\begin{cases}
\mu(n),&\text{if }1\leq j\leq m,\\
\mu_3(n),&\text{if }m+1\leq j\leq 2m.
\end{cases}.
$$
Clearly the left side of (\ref{ELambdaRTheta}) coincides with
$$\E\bigg(\prod_{j=1}^{2m} \sum_{\substack{d_j,e_j \leq R \\ d_j,e_j | \theta_j(\mathbf{x})}}u_j(d_j)u_j(e_j)\chi\bigg(\frac{\log d_j}{\log R}\bigg)\chi\bigg(\frac{\log e_j}{\log R}\bigg)\bigg|\mathbf{x}\in\bB\bigg),$$ 
which can be rearranged as
$$\sum_{\substack{d_j,e_j \leq R\\
\mu(d_j)\mu(e_j)\neq 0}}\prod_{j=1}^{2m}u_j(d_j)u_j(e_j)\chi\bigg(\frac{\log d_j}{\log R}\bigg)\chi\bigg(\frac{\log e_j}{\log R}\bigg)\cdot\E\bigg(\prod_{j=1}^{2m}\mathbf{1}_{d_j,e_j|\theta_j(\mathbf{x})}|\mathbf{x}\in\bB \bigg)$$
Assume that $d_1,\ldots,d_{2m},e_1,\ldots,e_{2m}$ are all square-free integers lying in $[1,R]$. Let
$D=[d_1,\ldots,d_{2m},e_{1},\ldots,e_{2m}]$ be the least common multiple of those $d_j,e_j$. 
Let $$\cI_{d_1,\ldots,d_{2m}}(p):= \{ 1 \leq j \leq 2m: p | d_j \}$$ 
and $$
\lambda_\cI^{*}(p) :=p\cdot\E\bigg( \prod_{j \in\cI} {\mathbf{1}}_{p\mid\theta_j(\bx)}\bigg|\bx \in \Z_p^h \bigg).$$
Suppose that $p\mid D$.
First, assume that $p\nmid W$. For $1\leq i<j\leq 2m$, since $\theta_i$ is not a rational multiple of $\theta_j$, we must have
$$
|\{\bx\in\Z_p^h:\,p\text{ divides both }\theta_i(\bx)\text{ and }\theta_j(\bx)\}|=O(p^{h-2}).
$$
So $\lambda_\cI^*(p)=O(p^{-1})$ whenever $|\cI|\geq 2$. Of course, if $|\cI|=1$, it is easy to see that $\lambda_\cI^*(p)=1+O(p^{-1})$.
Next, assume that $p\nmid W$. Recall that $(q_0^2b+1,W)=1$ and $(b,W)$ has no prime factor of the form $4k+3$.
Then $\lambda_\cI^*(p)=0$ provided that $p\equiv 3\pmod{4}$, or $p\equiv 1\pmod{4}$ and $\cI\cap\{1,\ldots,m\}\neq\emptyset$. Of course, if $p\equiv 1\pmod{4}$ and $\cI\subseteq\{m+1,\ldots,2m\}$, we still have $\lambda_\cI^*(p)=1+O(p^{-1})$ or $O(p^{-1})$ according to whether $|\cI|=1$ or $|\cI|\geq 2$.

Clearly $D\leq R^{4m}$. Note that $\mathbf{1}_{d_j,e_j|\theta_j(\mathbf{x})}$ can  be viewed as a function over $\Z_D^h$. So by the Chinese remainder theorem, we have
\begin{align*}
\E\bigg(\prod_{j=1}^{2m}\mathbf{1}_{d_j,e_j|\theta_j(\mathbf{x})}|\mathbf{x}\in\bB \bigg)=& \E\bigg(\prod_{j=1}^{2m}\mathbf{1}_{d_j,d_j'|\theta_j(\mathbf{x})}|\mathbf{x} \in\Z_D^h \bigg)+O_{m,h}\bigg(\frac{D}{\min_{1\leq i\leq h}|I_i|}\bigg)\\
=&\prod_{p \text{ prime}} \frac {\lambda_{\cI_{d_1,\ldots,d_{2m}}(p) \cup\cI_{e_1,\ldots,e_{2m}}(p)}^{*}(p)}{p}+O_{m,h}(R^{-6m}).
\end{align*}
It follows that
\begin{align*}
&\sum_{\substack{d_j,e_j \leq R\\
\mu(d_j)\mu(e_j)\neq 0}}\prod_{j=1}^{2m}u_j(d_j)u_j(e_j)\chi\bigg(\frac{\log d_j}{\log R}\bigg)\chi\bigg(\frac{\log e_j}{\log R}\bigg)\cdot\E\bigg(\prod_{j=1}^{2m}\mathbf{1}_{d_j,e_j|\theta_j(\mathbf{x})}|\mathbf{x}\in\bB \bigg)\\
=&\sum_{\substack{d_j,e_j \leq R\\
\mu(d_j)\mu(e_j)\neq 0}}\prod_{j=1}^{2m}u_j(d_j)u_j(e_j)\chi\bigg(\frac{\log d_j}{\log R}\bigg)\chi\bigg(\frac{\log e_j}{\log R}\bigg)\prod_p \frac {\lambda_{\cI_{d_1,\ldots,d_{2m}}(p) \cup \cI_{e_1,\ldots,e_{2m}}(p)}^{*}(p)}{p}+O_{m,h}(R^{-2m})\\
=&\sum_{\substack{d_j,e_j \leq R\\
\mu(d_j)\mu(e_j)\neq 0}}\prod_{j=1}^{2m}u_j(d_j)u_j(e_j)\chi\bigg(\frac{\log d_j}{\log R}\bigg)\chi\bigg(\frac{\log e_j}{\log R}\bigg)\cdot\frac{g(d_1, \ldots ,d_{2m}, e_1, \ldots ,e_{2m})}{[d_1,\ldots,d_{2m},e_1,\ldots,e_{2m}]},
\end{align*}
where $$g(d_1,\ldots,d_{2m},e_1,\ldots,e_{2m}):=\prod_{p\text{ prime}} \lambda_{\cI_{d_1,\ldots,d_{2m}}(p) \cup \cI_{e_1,\ldots,e_{2m}}(p)}(p)$$
and
$$\lambda_{\cI_{d_1,\ldots,d_{2m}}(p) \cup \cI_{e_1,\ldots,e_{2m}}(p)}(p):=
\begin{cases}
\lambda^{*}_{\cI_{d_1,\ldots,d_{2m}}(p) \cup \cI_{e_1,\dots,e_{2m}}(p)}(p),&\text{if } p|D,\\
1,&\text{if }p\nmid D.
\end{cases}
$$
As we have shown,
$$\lambda_{\cI}(p)=
\begin{cases}
1,&\text{if }\cI=\emptyset,\\
0,&\text{if }p\mid W\text{ and }p\equiv 3\pmod{4},\\
0,&\text{if }p\mid W, p\equiv 1\pmod{4}\text{ and }\cI\cap\{1,\ldots,m\}\neq\emptyset,\\
\lambda_{\cI}+O(p^{-1}),&\text{otherwise},
\end{cases}$$
where 
$$\lambda_\cI:=\begin{cases}
1,&\text{if }|\cI|=1,\\
0,&\text{if }|\cI|\geq2.
\end{cases}
$$

Write
$$e^x\chi(x)=\int_{-\infty}^{+\infty}\psi(t)e^{-ixt}dt$$
for some function $\psi$.
We know that $\psi$ is rapidly decreasing, i.e., obeys the bounds $$\psi(t)=O_A\big((1+|t|)^{-A}\big)$$ for any $A>0$. Then
$$\chi\bigg(\frac{\log d_j}{\log R}\bigg)=\int_{-\infty}^{+\infty}d_j^{-\frac{1+it}{\log R}}\psi(t)dt=\int_{(\log R)^{\frac12}}^{-(\log R)^{\frac12}}d_j^{-\frac{1+it}{\log R}}\psi(t)dt+O_{A,\chi}\big((\log R)^{-A}\big)$$
for any arbitrarily large $A$.
It is easy to see that
$$
\sum_{\substack{d_j,e_j \leq R\\
\mu(d_j)\mu(e_j)\neq 0}}\frac{1}{[d_1,\ldots,d_{2m},e_1,\ldots,e_{2m}]}
\leq\prod_{\substack{p\text{ prime}\\ p\leq R}}\bigg(1+\frac{4m}{p}\bigg)=O_m\big((\log R)^{4m}\big).
$$
Also, note that $g(d_1,\ldots,d_{2m},e_1,\ldots,e_{2m})$ is bounded. Hence for any large $A>0$,
\begin{align*}
&\sum_{\substack{d_j,e_j \leq R\\
\mu(d_j)\mu(e_j)\neq 0}}\frac{g(d_1, \ldots ,d_{2m}, e_1, \ldots ,e_{2m})}{[d_1,\ldots,d_{2m},e_1,\ldots,e_{2m}]}\prod_{j=1}^{2m}u_j(d_j)u_j(e_j)\chi\bigg(\frac{\log d_j}{\log R}\bigg)\chi\bigg(\frac{\log e_j}{\log R}\bigg)\\
=&\int_{-(\log R)^{\frac12}}^{(\log R)^{\frac12}}\cdots \int_{-(\log R)^{\frac12}}^{(\log R)^{\frac12}}\Omega(s_1,\ldots,s_{2m},t_1,\ldots,t_{2m})\prod_{j=1}^{2m}\psi(s_j)\psi(t_j)d s_jdt_j+O_{A,\chi}\big((\log R)^{-A}\big),
\end{align*}
where
$$
\Omega(s_1,\ldots,s_{2m},t_1,\ldots,t_{2m})=
\sum_{\substack{d_1,\ldots,d_{2m}\\ e_1,\ldots,e_{2m}}}\frac{g(d_1, \ldots ,d_{2m}, e_1, \ldots ,e_{2m})}{[d_1,\ldots,d_{2m},e_1,\ldots,e_{2m}]}\prod_{j=1}^{2m}\frac{u_j(d_j)u_j(e_j)}{d_j^{\frac{1+is_j}{\log R}}e_j^{\frac{1+it_j}{\log R}}}.
$$

Write  $y_j=(1+is_j)/\log R$ and $z_j=(1+it_j)/\log R$ for $1\leq j\leq 2m$. It is easy to see that
\begin{align*}
\Omega(s_1,\ldots,s_{2m},t_1,\ldots,t_{2m})
=&\sum_{\substack{d_1,\ldots,d_{2m}\\ e_1,\ldots,e_{2m}}}\frac{g(d_1, \ldots ,d_{2m}, e_1, \ldots ,e_{2m})}{[d_1,\ldots,d_{2m},e_1,\ldots,e_{2m}]}\prod_{j=1}^{2m}\frac{u_j(d_j)u_j(e_j)}{d_j^{y_j}e_j^{z_j}}\\
=&\prod_{\substack{p \equiv 3\pmod{4}}}\bigg(1+\sum_{\substack{\cI,\cJ\subseteq \{1,\dots,2m\}\\\cI\cup \cJ\neq \emptyset}}\frac{(-1)^{|\cI|+|\cJ|}\lambda_{\cI\cup \cJ}(p)}{p^{1+\sum_{j\in\cI}y_j+\sum_{j\in\cJ}z_j}}\bigg)
\\&\cdot\prod_{\substack{p \not\equiv 3\pmod{4}}}\bigg(1+\sum_{\substack{\cI,\cJ\subseteq \{1,\dots,m\}\\\cI\cup \cJ\neq \emptyset}}\frac{(-1)^{|\cI|+|\cJ|}\lambda_{\cI\cup \cJ}(p)}{p^{1+\sum_{j\in \cI}y_j+\sum_{j\in \cJ}z_j}}\bigg).
\end{align*}
Let
$$
\eta_1=\sum_{\substack{\cI,\cJ\subseteq\{1,\dots,2m\}\\\cI\cup\cJ\neq\emptyset}}(-1)^{|\cI|+|\cJ|}\lambda_{\cI\cup \cJ},\quad\eta_2=\sum_{\substack{\cI,\cJ\subseteq\{1,\dots,m\}\\\cI\cup \cJ\neq\emptyset}}(-1)^{|\cI|+|\cJ|}\lambda_{\cI\cup \cJ},
$$
and
$$
\kappa_1(p)=\sum_{\substack{\cI,\cJ\subseteq\{1,\dots,2m\}\\\cI\cup \cJ\neq\emptyset}}(-1)^{|\cI|+|\cJ|}\lambda_{\cI\cup \cJ}(p),\quad\kappa_2(p)=\sum_{\substack{\cI,\cJ\subseteq\{1,\dots,m\}\\\cI\cup \cJ\neq\emptyset}}(-1)^{|\cI|+|\cJ|}\lambda_{\cI\cup \cJ}(p).
$$
Applying Lemma \ref{cpjpsjL}, we get
\begin{align*}
&\prod_{\substack{p \equiv 3\pmod{4}}}\bigg(1-\sum_{\substack{\cI,\cJ\subseteq \{1,\dots,2m\}\\\cI\cup \cJ\neq \emptyset}}\frac{(-1)^{|\cI|+|\cJ|+1}\lambda_{\cI\cup \cJ}(p)}{p^{1+\sum_{j\in \cI}y_j+\sum_{j\in \cJ}z_j}}\bigg)\\
=&G_1\cdot\bigg(1+O\bigg(\frac{\log w}{(\log R)^{\frac12}}\bigg)\bigg)\prod_{\substack{\cI,\cJ\in \{1,\dots,2m\}\\\cI\cup \cJ\neq\emptyset}}\bigg(\alpha_0\cdot\bigg(\sum_{j\in \cI}y_j+\sum_{j\in \cJ}z_j\bigg)\bigg)^{(-1)^{|\cI|+|\cJ|+1}\cdot\frac{1}{2}\lambda_{\cI\cup\cJ}},
\end{align*}
where
$$G_1=\prod_{\substack{p \equiv 3\pmod{4}}}\bigg(1+\frac{\kappa_1(p)}{p}\bigg)\cdot\bigg(1-\frac{1}{p}\bigg)^{\eta_1}.
$$
Similarly,
\begin{align*}
&\prod_{\substack{p\not\equiv 3\pmod{4}}}\bigg(1-\sum_{\substack{\cI,\cJ\subseteq \{1,\dots,m\}\\ \cI\cup \cJ\neq \emptyset}}\frac{(-1)^{|\cI|+|\cJ|+1}\lambda_{\cI\cup \cJ}(p)}{p^{1+\sum_{j\in \cI}y_j+\sum_{j\in \cJ}z_j}}\bigg)\\
=&G_2\cdot\bigg(1+O\bigg(\frac{\log w}{(\log R)^{\frac12}}\bigg)\bigg)\prod_{\substack{\cI,\cJ\in \{1,\dots,m\}\\ \cI\cup \cJ\neq\emptyset}}\bigg(\alpha_0^{-1}\cdot\bigg(\sum_{j\in \cI}y_j+\sum_{j\in \cJ}z_j\bigg)\bigg)^{(-1)^{|\cI|+|\cJ|+1}\cdot\frac{1}{2}\lambda_{\cI\cup \cJ}},
\end{align*}
where
$$G_2=\prod_{\substack{p\not\equiv 3\pmod{4}}}\bigg(1+\frac{\kappa_2(p)}{p}\bigg)\cdot\bigg(1-\frac{1}{p}\bigg)^{\eta_2}.
$$

Suppose that $p\equiv 3\pmod{4}$. Clearly $\kappa_1(p)=0$ if $p\mid W$. Assume that $p\nmid W$. By (\ref{cpjcjp1}),
$$
\kappa_1(p)=\sum_{\substack{\cI,\cJ\subseteq\{1,\dots,2m\}\\ |\cI\cup \cJ|=1}}(-1)^{|\cI|+|\cJ|}+O_m(p^{-1})=-2m+O_m(p^{-1}).
$$
Similarly, we also have $\eta_1=-2m$. So
\begin{align*}
G_1=&\prod_{\substack{p \equiv 3\pmod{4}\\ p\mid W}}\bigg(1-\frac{1}{p}\bigg)^{-2m}\cdot\prod_{\substack{p \equiv 3\pmod{4}\\ p\nmid W}}\bigg(1-\frac{2m+O(p^{-1})}{p}\bigg)\bigg(1-\frac{1}{p}\bigg)^{-2m}\\
=&\big(1+o_{w}(1)\big)\prod_{\substack{p \equiv 3\pmod{4}\\ p\mid W}}\bigg(1-\frac{1}{p}\bigg)^{-2m}.
\end{align*}

Suppose that $p\not\equiv 3\pmod{4}$. Clearly we still have $\kappa_2(p)=0$ for those $p\mid W$. If $p\nmid W$, then
$$
\kappa_2(p)=\sum_{\substack{\cI,\cJ\subseteq\{1,\dots,m\}\\ |\cI\cup \cJ|=1}}(-1)^{|\cI|+|\cJ|}+O_m(p^{-1})=-m+O_m(p^{-1}).
$$
Also, $\eta_2=-m$. Thus
\begin{align*}
G_2=\big(1+o_{w}(1)\big)\prod_{\substack{p \not\equiv 3\pmod{4}\\ p\mid W}}\bigg(1-\frac{1}{p}\bigg)^{-m}.
\end{align*}

On the other hand, clearly
\begin{align*}
&\prod_{\substack{\cI,\cJ\in \{1,\dots,2m\}\\ \cI\cup \cJ\neq\emptyset}}\bigg(\alpha_0\cdot\bigg(\sum_{j\in \cI}y_j+\sum_{j\in \cJ}z_j\bigg)\bigg)^{(-1)^{|\cI|+|\cJ|+1}\cdot\frac{1}{2}\lambda_{\cI\cup \cJ}}\\
=&\bigg(\frac{\alpha_0}{\log R}\bigg)^{-\eta_1}\prod_{\substack{\cI,\cJ\in \{1,\dots,2m\}\\ \cI\cup \cJ\neq\emptyset}}\bigg(\log R\sum_{j\in \cI}y_j+\log R\sum_{j\in \cJ}z_j\bigg)^{(-1)^{|\cI|+|\cJ|+1}\cdot\frac{1}{2}\lambda_{\cI\cup \cJ}}\\
=&\frac{\alpha_0^m}{(\log R)^m}\cdot\Psi_1(s_1,\ldots,s_{2m},t_1,\ldots,t_{2m}),
\end{align*}
where
$$
\Psi_1(s_1,\ldots,s_{2m},t_1,\ldots,t_{2m})=\prod_{\substack{\cI,\cJ\in \{1,\dots,2m\}\\ \cI\cup \cJ\neq\emptyset}}\bigg(\sum_{j\in \cI}(1+is_j)+\sum_{j\in \cJ}(1+it_j)\bigg)^{(-1)^{|\cI|+|\cJ|+1}\cdot\frac{1}{2}\lambda_{\cI\cup \cJ}}.
$$
Similarly, since $\eta_2=-m$, we also have
\begin{align*}
&\prod_{\substack{\cI,\cJ\in \{1,\dots,m\}\\ \cI\cup \cJ\neq\emptyset}}\bigg(\alpha_0\cdot\bigg(\sum_{j\in \cI}y_j+\sum_{j\in \cJ}z_j\bigg)\bigg)^{(-1)^{|\cI|+|\cJ|+1}\cdot\frac{1}{2}\lambda_{\cI\cup \cJ}}\\
=&\frac{1}{\alpha_0^{\frac12m}(\log R)^{\frac12m}}\cdot\Psi_2(s_1,\ldots,s_{m},t_1,\ldots,t_{m}),
\end{align*}
where
$$
\Psi_2(s_1,\ldots,s_{m},t_1,\ldots,t_{m})=\prod_{\substack{\cI,\cJ\in \{1,\dots,m\}\\\cI\cup \cJ\neq\emptyset}}\bigg(\sum_{j\in \cI}(1+is_j)+\sum_{j\in \cJ}(1+it_j)\bigg)^{(-1)^{|\cI|+|\cJ|+1}\cdot\frac{1}{2}\lambda_{\cI\cup \cJ}}.
$$
Thus we get
\begin{align*}
&\Omega(s_1,\ldots,s_{2m},t_1,\ldots,t_{2m})\\
=&\big(1+o_{w}(1)\big)\prod_{\substack{p \equiv 3\pmod{4}\\ p\mid W}}\bigg(1-\frac{1}{p}\bigg)^{-2m}\prod_{\substack{p\not\equiv 3\pmod{4}\\ p\mid W}}\bigg(1-\frac{1}{p}\bigg)^{-m}\\
&\cdot \frac{\alpha_0^{\frac12m}}{(\log R)^{\frac32m}}\cdot\Psi_1(s_1,\ldots,s_{m},t_1,\ldots,t_{m})\Psi_2(s_1,\ldots,s_{m},t_1,\ldots,t_{m}).
\end{align*}
Let
$$
C_{m,\chi}=\int_{-(\log R)^{\frac12}}^{(\log R)^{\frac12}}\cdots \int_{-(\log R)^{\frac12}}^{(\log R)^{\frac12}}\prod_{k=1}^2\Psi_k(s_1,\ldots,s_{m},t_1,\ldots,t_{m})\cdot\prod_{j=1}^{2m}\psi(s_j)\psi(t_j)ds_jdt_j.
$$
It is easy to see that 
$$
C_{m,\chi}=C_{\chi}^m+O_A\big((\log R)^{-A}\big)
$$
for any $A>0$, where
$$
C_{\chi}=\iint_{\R\times\R}(1+is)^{\frac32}(1+it)^{\frac32}(2+it+is)^{-\frac32}\psi(t)\psi(s)ds dt.
$$
Finally, recalling that $0\leq f(x)\leq\nu(x)$ and 
$\E(f(x)|x\in\Z_N)>0$, clearly we must have $C_{\chi}>0$.
Then Proposition \ref{GY} is concluded.
\qed

\begin{acknowledgment} We thank Professor Henryk Iwaniec for his helpful explanation on Theorem 1 of \cite{Iwaniec72}.
\end{acknowledgment}

     \end{document}